
\documentclass[12pt,reqno]{amsart}
\usepackage{amsfonts}
\usepackage{amssymb,amsmath}
\usepackage{amscd}
\usepackage{hyperref}
\usepackage{graphicx}
\usepackage{epsfig}
\usepackage{graphicx}
\usepackage{epstopdf}

\setcounter{MaxMatrixCols}{10}

\newtheorem{theorem}{Theorem}[section]

\newtheorem{example}{Example}[section]

\numberwithin{equation}{section}
\textwidth144mm

\begin{document}
\title[Right Circulant Matrices  ]{Right Circulant Matrices with Generalized
Fibonacci and\ Lucas Polynomials and Coding Theory }
\author{S\"{U}MEYRA\ U\c{C}AR}
\address{Bal\i kesir University\\
Department of Mathematics\\
10145 Bal\i kesir, TURKEY}
\email{sumeyraucar@balikesir.edu.tr}
\author{Nihal YILMAZ\ \"{O}ZG\"{U}R}
\address{Bal\i kesir University\\
Department of Mathematics\\
10145 Bal\i kesir, TURKEY}
\email{nihal@balikesir.edu.tr}
\subjclass[2010]{Primary 11B39, 15A15, 11B37, 68P30, 14G50.}
\keywords{Generalized Fibonacci polynomials, generalized Lucas polynomials.}

\begin{abstract}
In this paper, we give two new coding algorithms by means of right circulant
matrices with elements generalized Fibonacci and Lucas polynomials. For this
purpose, we study basic properties of right circulant matrices using
generalized Fibonacci polynomials $F_{p,q,n}\left( x\right) $, generalized
Lucas polynomials $L_{p,q,n}\left( x\right) $ and geometric sequences.
\end{abstract}

\maketitle

\section{\textbf{Introduction}}

\label{sec:1}

There are many studies for the coding theory using Fibonacci numbers, Lucas
numbers, Lucas $p$ numbers, Pell numbers, generalized Pell $\left(
p,i\right) $-numbers in the literature $($for more details see \cite{prasad}%
, \cite{stakhov}, \cite{tas}, \cite{tas2}, \cite{ucar2} and references
therein$)$. Here we consider two classes of right circulant matrices whose
entries are generalized Fibonacci and Lucas polynomials to obtain new
coding/decoding algorithms.

Let $n>0$ be an integer. From \cite{catarino}, we know that a $g$-circulant
matrix is a square matrix of order n in the following form
\begin{equation*}
A_{g,n}=%
\begin{pmatrix}
a_{1} & a_{2} & \cdots & a_{n} \\
a_{n-g+1} & a_{n-g+2} & \cdots & a_{n-g} \\
a_{n-2g+1} & a_{n-2g+2} &  & a_{n-2g} \\
\vdots & \vdots & \ddots & \vdots \\
a_{g+1} & a_{g+2} & \cdots & a_{g}%
\end{pmatrix}%
,
\end{equation*}%
where g is a nonnegative integer and each of subscripts is understood to be
reduced modulo $n$. The first rows of $A_{g,n}$ is $(a_{1},a_{2},...,a_{n})$
and its $(j+1)$th row is obtained by giving its $j$th row a right circulant
shift by $g$ positions.

It is clear that $g=1$ or $g=n+1$ yields the standart right circulant
matrix, or simply, circulant matrix. Thus a right circulant matrix is
written as
\begin{equation*}
RCirc\left( a_{1},a_{2},...,a_{n}\right) =%
\begin{pmatrix}
a_{1} & a_{2} & \cdots & a_{n} \\
a_{n} & a_{1} & \cdots & a_{n-1} \\
\vdots & \vdots & \ddots & \vdots \\
a_{2} & a_{3} & \cdots & a_{1}%
\end{pmatrix}%
.
\end{equation*}%
A geometric sequence is known to be a sequence $\{a_{k}\}_{k=1}^{\infty }$
such that each term is given by a multiple of the previous one.

In \cite{catarino}, it was given a $g$-circulant, right circulant and left
circulant matrices whose entries are $h\left( x\right) $-Fibonacci
polynomials and presented the determinants of these matrices. In \cite%
{bueno-2016}, it was introduced the right circulant matrices with ratio of
the elements of Fibonacci and a geometric sequence and given eigenvalues,
determinants, Euclidean norms and inverses of these matrices.

In \cite{gong}, it has been dealt with circulant matrices with the
Jacobsthal and Jacobsthal-Lucas numbers, studied the invertibility of these
circulant matrices and presented the determinant and the inverse matrix.
Similarly, in \cite{shen}, it has been studied inverses and determinants of
the circulant matrices related to Fibonacci and Lucas numbers. Furthermore,
there are many applications of circulant matrices in the literature. For
example, these matrices has been studied related to models and several
differential equations such as fractional order models for nonlocal
epidemics, differential delay equations $($for more details one can see \cite%
{ahmed}, \cite{nussbaum}, \cite{qu}, \cite{wu} and the references therein$)$.

Recently, $h(x)$-Fibonacci polynomials are defined by $F_{h,0}(x)=0,$ $%
F_{h,1}(x)=1$ and $F_{h,n+1}(x)=h(x)F_{h,n}(x)+F_{h,n-1}(x)$ for $n\geq 1$,
and then it was given some properties of them in \cite{nalli}.

Let $p(x)$ and $q(x)$ be polynomials with real coefficients, $p\left(
x\right) \neq 0,q\left( x\right) \neq 0$ and $p^{2}\left( x\right) +4q\left(
x\right) >0.$ In \cite{kilic2011}, it was defined generalized Fibonacci
polynomials $F_{p,q,n}(x)$ by%
\begin{equation}
F_{p,q,n+1}(x)=p(x)F_{p,q,n}(x)+q(x)F_{p,q,n-1}(x)\text{, \ \ }n\geq 1
\label{eqn2}
\end{equation}%
with the initial values $F_{p,q,0}(x)=0,F_{p,q,1}(x)=1$ and generalized
Lucas polynomials $L_{p,q,n}(x)$ were defined by
\begin{equation}
L_{p,q,n+1}(x)=p(x)L_{p,q,n}(x)+q(x)L_{p,q,n-1}(x)\text{, \ \ }n\geq 1
\label{eqn10}
\end{equation}%
with the initial values $L_{p,q,0}(x)=2,L_{p,q,1}(x)=p\left( x\right) $. For
$p(x)=x$ and $q(x)=1$ we have Catalan's Fibonacci polynomials $F_{n}(x);$
for $p(x)=2x$ and $q(x)=1$ we have Byrd's polynomials $\varphi _{n}(x)$; $\ $%
for $p(x)=k$ and $q(x)=t$ we have generalized Fibonacci numbers $U_{n}$ ;
for $p(x)=k$ and $q(x)=1$ we have $k$-Fibonacci numbers $F_{k,n}$; for $%
p(x)=q(x)=1$ we have classical Fibonacci numbers $F_{n}$ (for more details
see \cite{catarino1}, \cite{falcon}, \cite{kalman}, \cite{koshy}, \cite{siar}
and \cite{ucar}$)$.

For $p(x)=x$ and $q(x)=1$ we have Lucas polynomials $L_{n}(x);$ for $p(x)=k$
and $q(x)=t$ we have generalized Lucas numbers $V_{n}$; for$\ p(x)=k$ and $%
q(x)=1$ we have $k$-Lucas numbers $L_{k,n}$; for $p(x)=$ $q(x)=1$ we have
classical Lucas numbers $L_{n}$ (for more details see \cite{falcon2007},
\cite{godase}, \cite{koshy}, \cite{siar} and \cite{ucar}$)$.

Let $\alpha \left( x\right) $ and $\beta \left( x\right) $ be the roots of
the characteristic equation
\begin{equation*}
v^{2}-vp\left( x\right) -q\left( x\right) =0
\end{equation*}%
of the recurrence relation $($\ref{eqn2}$).$ From \cite{kilic2011}, we know
that Binet formulas for generalized Fibonacci polynomials $F_{p,q,n}\left(
x\right) $ and generalized Lucas polynomials $L_{p,q,n}\left( x\right) $ are
of the following forms, respectively:
\begin{equation*}
F_{p,q,n}\left( x\right) =\frac{\alpha ^{n}\left( x\right) -\beta ^{n}\left(
x\right) }{\alpha \left( x\right) -\beta \left( x\right) },\text{ for }n\geq
0
\end{equation*}%
and%
\begin{equation*}
L_{p,q,n}\left( x\right) =\alpha ^{n}\left( x\right) +\beta ^{n}\left(
x\right) ,\text{ for }n\geq 0\text{,}
\end{equation*}%
where
\begin{equation*}
\alpha \left( x\right) =\frac{p\left( x\right) +\sqrt{p^{2}\left( x\right)
+4q\left( x\right) }}{2}\text{ and }\beta \left( x\right) =\frac{p\left(
x\right) -\sqrt{p^{2}\left( x\right) +4q\left( x\right) }}{2}.
\end{equation*}%
It is clear that $\alpha \left( x\right) +\beta \left( x\right) =p\left(
x\right) ,$ $\alpha \left( x\right) \beta \left( x\right) =-q\left( x\right)
$ and $\alpha \left( x\right) -\beta \left( x\right) =\sqrt{p^{2}\left(
x\right) +4q\left( x\right) }.$

In this study, we investigate right circulant matrices using generalized
Fibonacci polynomials, generalized Lucas polynomials and geometric
sequences. In Section \ref{sec:2}, we give the eigenvalues and determinants
of the right circulant matrices whose entries are the ratio of the elements
of generalized Fibonacci sequence and some geometric sequences. In Section %
\ref{sec:3}, we give right circulant matrices whose entries are the
generalized Fibonacci and Lucas polynomials and calculate the determinants
of these matrices. In Section \ref{sec:4}, we give some applications of
right circulant matrices to coding theory.

From now on, we shortly denote $\alpha \left( x\right) $ by $\alpha $, $%
\beta \left( x\right) $ by $\beta $, $p\left( x\right) $ by $p$ and $q\left(
x\right) $ by $q.$

\section{\textbf{Right Circulant Matrices with} \textbf{Generalized
Fibonacci Polynomials and Geometric Sequences}}

\label{sec:2}

Let $\{f_{n}\}_{n=1}^{\infty }$ be the sequence of the form%
\begin{equation*}
f_{n}=\frac{F_{p,q,n}(x)}{ar^{n}},
\end{equation*}%
where $F_{p,q,n}(x)$ is the $n$-th generalized Fibonacci polynomial and $%
ar^{n}$ is the $n$-th element of any geometric sequence.

Using these types of sequences, we consider the following right circulant
matrix $\mathcal{F}_{n}$:
\begin{equation*}
\mathcal{F}_{n}=%
\begin{pmatrix}
f_{0} & f_{1} & f_{2} & \cdots & f_{n-2} & f_{n-1} \\
f_{n-1} & f_{0} & f_{1} & \cdots & f_{n-3} & f_{n-2} \\
f_{n-2} & f_{n-1} & f_{0} & \cdots & f_{n-4} & f_{n-3} \\
\vdots & \vdots & \vdots & \ddots & \vdots & \vdots \\
f_{2} & f_{3} & f_{4} & \cdots & f_{0} & f_{1} \\
f_{1} & f_{2} & f_{3} & \cdots & f_{n-1} & f_{0}%
\end{pmatrix}%
.
\end{equation*}

\begin{theorem}
\label{thm1}The eigenvalues of the matrix $\mathcal{F}_{n}$ are as follows:%
\begin{equation*}
\lambda _{m}=\frac{-rF_{p,q,n}(x)-w^{-m}\left( qF_{p,q,n-1}(x)-r^{n}\right)
}{ar^{n-1}\left( r-\alpha w^{-m}\right) \left( r-\beta w^{-m}\right) },
\end{equation*}%
where $m=0,1,...,n-1,$ $\alpha =\frac{p+\sqrt{p^{2}+4q}}{2},\beta =\frac{p-%
\sqrt{p^{2}+4q}}{2}$ and $w=e^{\frac{2\pi i}{n}}$.
\end{theorem}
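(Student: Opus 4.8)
The plan is to exploit the fact that $\mathcal{F}_n$ is an ordinary circulant matrix, $\mathcal{F}_n = RCirc(f_0,f_1,\ldots,f_{n-1})$, so that its spectrum is given by the standard circulant eigenvalue formula. Taking the eigenvector $(1,w^{-m},w^{-2m},\ldots,w^{-(n-1)m})^{T}$ to belong to $\lambda_m$ (this choice is what produces $w^{-m}$ rather than $w^{m}$ in the answer), the circulant structure yields
\begin{equation*}
\lambda_m=\sum_{j=0}^{n-1}f_j\,w^{-jm}=\frac{1}{a}\sum_{j=0}^{n-1}\frac{F_{p,q,j}(x)}{r^{j}}\,w^{-jm},\qquad m=0,1,\ldots,n-1,
\end{equation*}
so the whole problem reduces to evaluating this finite sum in closed form.

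First I would substitute the Binet formula $F_{p,q,j}(x)=\bigl(\alpha^{j}-\beta^{j}\bigr)/(\alpha-\beta)$, which splits $\lambda_m$ into a difference of two geometric sums,
\begin{equation*}
\lambda_m=\frac{1}{a(\alpha-\beta)}\left[\sum_{j=0}^{n-1}\left(\frac{\alpha w^{-m}}{r}\right)^{j}-\sum_{j=0}^{n-1}\left(\frac{\beta w^{-m}}{r}\right)^{j}\right].
\end{equation*}
Each sum is evaluated by the standard geometric-series formula, and the decisive simplification is that $w^{-mn}=e^{-2\pi i m}=1$, so that $(\alpha w^{-m}/r)^{n}=\alpha^{n}/r^{n}$ and similarly for $\beta$. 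After pulling out the common factor $r^{1-n}$, the two fractions acquire numerators $\alpha^{n}-r^{n}$ and $\beta^{n}-r^{n}$ and denominators $\alpha w^{-m}-r$ and $\beta w^{-m}-r$, respectively.

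The heart of the argument, and the step I expect to be the most delicate, is to place these two fractions over the common denominator $(\alpha w^{-m}-r)(\beta w^{-m}-r)$ and simplify the cross-multiplied numerator. After the $r^{n+1}$ terms cancel, the remaining numerator regroups into three pieces that I would reduce using $\alpha\beta=-q$, together with $\alpha^{n-1}-\beta^{n-1}=(\alpha-\beta)F_{p,q,n-1}(x)$ and $\alpha^{n}-\beta^{n}=(\alpha-\beta)F_{p,q,n}(x)$; this exposes an overall factor $(\alpha-\beta)$ in the numerator. Cancelling it against the $(\alpha-\beta)$ in the prefactor eliminates the surd entirely and, after rearrangement, yields the numerator $-rF_{p,q,n}(x)-w^{-m}\bigl(qF_{p,q,n-1}(x)-r^{n}\bigr)$. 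A final cosmetic observation, that $(\alpha w^{-m}-r)(\beta w^{-m}-r)=(r-\alpha w^{-m})(r-\beta w^{-m})$ because the two sign changes cancel, rewrites the denominator as $ar^{n-1}(r-\alpha w^{-m})(r-\beta w^{-m})$ and gives exactly the stated expression for $\lambda_m$.
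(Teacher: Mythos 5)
Your proposal is correct and follows essentially the same route as the paper's own proof: the circulant eigenvalue formula $\lambda_m=\sum_{k=0}^{n-1}f_k w^{-mk}$, substitution of the Binet formula to split the sum into two geometric series, evaluation using $w^{-mn}=1$, and recombination over the common denominator $(r-\alpha w^{-m})(r-\beta w^{-m})$ with $\alpha\beta=-q$ and the Binet identities to recover $F_{p,q,n}(x)$ and $F_{p,q,n-1}(x)$. The only difference is that yours is a plan where the paper carries out the cross-multiplication explicitly, but every step you describe matches theirs.
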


\begin{proof}
From \cite{bueno-2012}, we know that the eigenvalues of a right circulant
matrix $\mathcal{F}_{n}$ are
\begin{equation}
\lambda _{m}=\overset{n-1}{\underset{k=0}{\sum }}f_{k}w^{-mk},  \label{eqn1}
\end{equation}%
where $m=e^{\frac{2\pi i}{n}}$ and $m=0,1,2,...,n-1.$ Using the equation $($%
\ref{eqn1}$)$ and the Binet's formula for the generalized Fibonacci
polynomials $F_{p,q,n}(x),$ we get \
\begin{equation*}
\lambda _{m}=\overset{n-1}{\underset{k=0}{\sum }}\frac{F_{p,q,k}(x)}{ar^{k}}%
w^{-mk}=\overset{n-1}{\underset{k=0}{\sum }}\frac{\alpha ^{k}-\beta ^{k}}{%
\left( \alpha -\beta \right) ar^{k}}w^{-mk},
\end{equation*}%
where $\alpha =\frac{p+\sqrt{p^{2}+4q}}{2},\beta =\frac{p-\sqrt{p^{2}+4q}}{2}%
.$ Then using the properties of the geometric series, we obtain%
\begin{eqnarray*}
\lambda _{m} &=&\frac{1}{a\left( \alpha -\beta \right) }\left( \frac{%
1-\left( \alpha /r\right) ^{n}}{1-\alpha w^{-m}/r}-\frac{1-\left( \beta
/r\right) ^{n}}{1-\beta w^{-m}/r}\right) \\
&=&\frac{1}{ar^{n-1}\left( \alpha -\beta \right) }\left( \frac{r^{n}-\alpha
^{n}}{r-\alpha w^{-m}}-\frac{r^{n}-\beta ^{n}}{r-\beta w^{-m}}\right) \\
&=&\frac{1}{ar^{n-1}\left( \alpha -\beta \right) }\left( \frac{\left(
r^{n}-\alpha ^{n}\right) \left( r-\beta w^{-m}\right) -\left( r-\alpha
w^{-m}\right) \left( r^{n}-\beta ^{n}\right) }{\left( r-\alpha w^{-m}\right)
\left( r-\beta w^{-m}\right) }\right) .
\end{eqnarray*}

By the fact $\alpha \beta =-q$, we find%
\begin{eqnarray*}
\lambda _{m} &=&\frac{-r\left( \alpha ^{n}-\beta ^{n}\right)
+r^{n}w^{-m}\left( \alpha -\beta \right) -w^{-m}\left( -\beta \alpha
^{n}+\alpha \beta ^{n}\right) }{ar^{n-1}\left( \alpha -\beta \right) \left(
r-\alpha w^{-m}\right) \left( r-\beta w^{-m}\right) } \\
&=&\frac{-r\left( \alpha ^{n}-\beta ^{n}\right) +r^{n}w^{-m}\left( \alpha
-\beta \right) -w^{-m}q\left( \alpha ^{n-1}-\beta ^{n-1}\right) }{%
ar^{n-1}\left( \alpha -\beta \right) \left( r-\alpha w^{-m}\right) \left(
r-\beta w^{-m}\right) } \\
&=&\frac{-rF_{p,q,n}(x)+r^{n}w^{-m}F_{p,q,1}(x)-w^{-m}qF_{p,q,n-1}(x)}{%
ar^{n-1}\left( r-\alpha w^{-m}\right) \left( r-\beta w^{-m}\right) }
\end{eqnarray*}%
and so
\begin{equation*}
\lambda _{m}=\frac{-rF_{p,q,n}(x)-w^{-m}\left( qF_{p,q,n-1}(x)-r^{n}\right)
}{ar^{n-1}\left( r-\alpha w^{-m}\right) \left( r-\beta w^{-m}\right) }.
\end{equation*}
\end{proof}

Now we give the following theorem.

\begin{theorem}
\ \label{thm2}The determinant of the matrix $\mathcal{F}_{n}$ is
\begin{equation*}
\det (\mathcal{F}_{n})=\frac{\left( -1\right)
^{n}r^{n}F_{p,q,n}^{n}(x)-\left( qF_{p,q,n-1}(x)-r^{n}\right) ^{n}}{%
a^{n}r^{n(n-1)}-\left( r^{2n}-r^{n}L_{p,q,n}(x)+(-q)^{n}\right) }.
\end{equation*}
\end{theorem}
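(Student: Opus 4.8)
The plan is to use the fact that $\mathcal{F}_{n}$ is a right circulant matrix, so that its determinant equals the product of its eigenvalues. Since Theorem~\ref{thm1} already supplies all $n$ eigenvalues explicitly, I would start from
\[
\det(\mathcal{F}_{n})=\prod_{m=0}^{n-1}\lambda_{m}
=\frac{\prod_{m=0}^{n-1}\bigl(-rF_{p,q,n}(x)-w^{-m}(qF_{p,q,n-1}(x)-r^{n})\bigr)}{\prod_{m=0}^{n-1}ar^{n-1}\left(r-\alpha w^{-m}\right)\left(r-\beta w^{-m}\right)},
\]
and then evaluate the numerator product and the denominator product separately.

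The key tool is the factorization $\prod_{m=0}^{n-1}\left(X-w^{-m}\right)=X^{n}-1$, which holds because, as $m$ runs through $0,1,\dots,n-1$, the numbers $w^{-m}=e^{-2\pi i m/n}$ run through all $n$-th roots of unity. From it I would record the two consequences I need: factoring out $d$ and setting $X=c/d$ gives $\prod_{m=0}^{n-1}\left(c-dw^{-m}\right)=c^{n}-d^{n}$, and replacing $d$ by $-d$ gives $\prod_{m=0}^{n-1}\left(c+dw^{-m}\right)=c^{n}-(-1)^{n}d^{n}$.

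For the numerator I would write each factor as $A+Bw^{-m}$ with $A=-rF_{p,q,n}(x)$ and $B=r^{n}-qF_{p,q,n-1}(x)$, so that the second consequence yields $A^{n}-(-1)^{n}B^{n}$. Substituting $A^{n}=(-1)^{n}r^{n}F_{p,q,n}^{n}(x)$ and $(-1)^{n}B^{n}=(-1)^{n}(-1)^{n}\bigl(qF_{p,q,n-1}(x)-r^{n}\bigr)^{n}=\bigl(qF_{p,q,n-1}(x)-r^{n}\bigr)^{n}$ produces exactly the asserted numerator $(-1)^{n}r^{n}F_{p,q,n}^{n}(x)-\bigl(qF_{p,q,n-1}(x)-r^{n}\bigr)^{n}$. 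For the denominator I would pull out the constant $\left(ar^{n-1}\right)^{n}=a^{n}r^{n(n-1)}$ and apply the first consequence twice, obtaining $\prod_{m=0}^{n-1}\left(r-\alpha w^{-m}\right)=r^{n}-\alpha^{n}$ and $\prod_{m=0}^{n-1}\left(r-\beta w^{-m}\right)=r^{n}-\beta^{n}$. Their product is $\left(r^{n}-\alpha^{n}\right)\left(r^{n}-\beta^{n}\right)=r^{2n}-r^{n}(\alpha^{n}+\beta^{n})+(\alpha\beta)^{n}$, and invoking the Binet formula $L_{p,q,n}(x)=\alpha^{n}+\beta^{n}$ together with $\alpha\beta=-q$ turns this into $r^{2n}-r^{n}L_{p,q,n}(x)+(-q)^{n}$. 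Assembling the two products then gives $\det(\mathcal{F}_{n})$ as the numerator above divided by $a^{n}r^{n(n-1)}\bigl(r^{2n}-r^{n}L_{p,q,n}(x)+(-q)^{n}\bigr)$, which is the formula of Theorem~\ref{thm2}.

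I expect the only delicate point to be the sign bookkeeping: correctly tracking the $(-1)^{n}$ factors when passing from $\prod(A+Bw^{-m})=A^{n}-(-1)^{n}B^{n}$ to the final numerator, and recognizing $(\alpha\beta)^{n}=(-q)^{n}$ rather than $q^{n}$. Everything else is a routine evaluation of the roots-of-unity products, and no genuinely hard estimate is involved.
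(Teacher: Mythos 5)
Your proposal is correct and follows essentially the same route as the paper: both take the determinant as the product of the eigenvalues from Theorem~\ref{thm1}, apply the roots-of-unity identity $\prod_{m=0}^{n-1}(x-yw^{-m})=x^{n}-y^{n}$ to the numerator and denominator separately, and finish with $\alpha\beta=-q$ and the Binet formula $L_{p,q,n}(x)=\alpha^{n}+\beta^{n}$. Note that your (and the paper's proof's) denominator is the product $a^{n}r^{n(n-1)}\bigl(r^{2n}-r^{n}L_{p,q,n}(x)+(-q)^{n}\bigr)$, so the minus sign appearing in the displayed statement of Theorem~\ref{thm2} is evidently a typographical error for multiplication.
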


\begin{proof}
Since the determinant of a matrix is the product of its eigenvalues, by
Theorem \ref{thm1} we obtain%
\begin{equation}
\det (\mathcal{F}_{n})=\prod_{m=0}^{n-1}\frac{-rF_{p,q,n}(x)-w^{-m}\left(
qF_{p,q,n-1}(x)-r^{n}\right) }{ar^{n-1}\left( r-\alpha w^{-m}\right) \left(
r-\beta w^{-m}\right) }.  \label{eqn3}
\end{equation}%
From the complex analysis, we know
\begin{equation}
\prod_{m=0}^{n-1}\left( x-yw^{-m}\right) =x^{n}-y^{n}  \label{eqn4}
\end{equation}%
Applying the equation $($\ref{eqn4}$)$\ to the equation $($\ref{eqn3}$),$ we
find%
\begin{eqnarray*}
\det (\mathcal{F}_{n}) &=&\frac{\left( -1\right)
^{n}r^{n}F_{p,q,n}^{n}(x)-\left( qF_{p,q,n-1}(x)-r^{n}\right) ^{n}}{%
a^{n}r^{n\left( n-1\right) }\left( r^{n}-\alpha ^{n}\right) \left(
r^{n}-\beta ^{n}\right) } \\
&=&\frac{\left( -1\right) ^{n}r^{n}F_{p,q,n}^{n}(x)-\left(
qF_{p,q,n-1}(x)-r^{n}\right) ^{n}}{a^{n}r^{n\left( n-1\right) }\left(
r^{2n}-r^{n}\left( \alpha ^{n}+\beta ^{n}\right) +\left( -q\right)
^{n}\right) }.
\end{eqnarray*}%
Using the Binet's formulas for the generalized Lucas polynomials $%
L_{p,q,n}\left( x\right) ,$ we get%
\begin{equation*}
\det (\mathcal{F}_{n})=\frac{\left( -1\right)
^{n}r^{n}F_{p,q,n}^{n}(x)-\left( qF_{p,q,n-1}(x)-r^{n}\right) ^{n}}{%
a^{n}r^{n\left( n-1\right) }\left( r^{2n}-r^{n}L_{p,q,n}(x)+\left( -q\right)
^{n}\right) }.
\end{equation*}
\end{proof}

Notice that for $p=x$ and $q=1$; for $p=2x$ and $q=1$; $p=k$ and $q=t$; $p=k$
and $q=1$; $p=q=1$ in Theorem \ref{thm1} and Theorem \ref{thm2}, we have
similar theorems for Catalan's Fibonacci polynomials $F_{n}\left( x\right) $%
, Byrd's polynomials $\varphi _{n}(x)$, generalized Fibonacci numbers $U_{n}$%
, $k$-Fibonacci numbers $F_{k,n}$, classical Fibonacci numbers $F_{n}$,
respectively. Also, in \cite{bueno-2012-2}, the right circulant matrix with
Fibonacci sequence was defined and given eigenvalues, Euclidean norm of this
matrix.

\section{\textbf{Right Circulant Matrices with Generalized Fibonacci and
Lucas Polynomials}}

\label{sec:3}

In this section we give the determinant of a right circulant matrix whose
elements are generalized Fibonacci polynomials $F_{p,q,n}\left( x\right) $
and generalized Lucas polynomials $L_{p,q,n}\left( x\right) $.

\begin{theorem}
\label{thm3}Let $G_{n}$ be a right circulant matrix of the following form:%
\begin{equation}
G_{n}=\left(
\begin{array}{cccccc}
F_{p,q,1}(x) & F_{p,q,2}(x) & F_{p,q,3}(x) & \cdots & F_{p,q,n-1}(x) &
F_{p,q,n}(x) \\
F_{p,q,n}(x) & F_{p,q,1}(x) & F_{p,q,2}(x) & \cdots & F_{p,q,n-2}(x) &
F_{p,q,n-1}(x) \\
F_{p,q,n-1}(x) & F_{p,q,n}(x) & F_{p,q,1}(x) & \cdots & F_{p,q,n-3}(x) &
F_{p,q,n-2}(x) \\
\vdots & \vdots & \vdots & \ddots & \vdots & \vdots \\
F_{p,q,4}(x) & F_{p,q,5}(x) & F_{p,q,6}(x) & \cdots & F_{p,q,2}(x) &
F_{p,q,3}(x) \\
F_{p,q,3}(x) & F_{p,q,4}(x) & F_{p,q,5}(x) & \cdots & F_{p,q,1}(x) &
F_{p,q,2}(x) \\
F_{p,q,2}(x) & F_{p,q,3}(x) & F_{p,q,4}(x) & \cdots & F_{p,q,n}(x) &
F_{p,q,1}(x)%
\end{array}%
\right) .  \label{eqn12}
\end{equation}%
Then we have
\begin{equation}
\det (G_{n})=\left( 1-F_{p,q,n+1}(x)\right) ^{n-1}+\left(
qF_{p,q,n}(x)\right) ^{n-2}\overset{n-1}{\underset{k=1}{\sum }}\left( \frac{%
1-F_{p,q,n+1}(x)}{qF_{p,q,n}(x)}\right) ^{k-1}qF_{p,q,k}(x).  \label{eqn6}
\end{equation}
\end{theorem}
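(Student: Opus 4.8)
The plan is to avoid the eigenvalue route of Theorems \ref{thm1}--\ref{thm2} (the target is a genuine sum, not a symmetric product of eigenvalues) and instead reduce $\det(G_n)$ by elementary column operations that exploit the defining recurrence. Throughout write $F_k=F_{p,q,k}(x)$. First I would right-multiply $G_n$ by the unit upper-triangular matrix $T$ realising the column replacements $C_j\mapsto C_j-pC_{j-1}-qC_{j-2}$ for $j=3,\dots,n$; that is $T_{jj}=1$, $T_{j-1,j}=-p$, $T_{j-2,j}=-q$, and all other entries zero. Since $\det T=1$, one has $\det(G_n)=\det(G_nT)$, so it remains to compute $M:=G_nT$.

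Next I would identify $M$. Since $G_n$ is a circulant with $(i,j)$-entry $F_{((j-i)\bmod n)+1}$, in every column $j\ge 3$ the combination $F_m-pF_{m-1}-qF_{m-2}$ vanishes whenever the three indices run consecutively, so only the two rows at which the circulant wraps around survive: the diagonal entry becomes $F_1-pF_n-qF_{n-1}=1-F_{n+1}$ and the entry immediately above it becomes $F_2-pF_1-qF_n=-qF_n$. Hence columns $1,2$ retain their cyclically shifted Fibonacci entries while columns $3,\dots,n$ become bidiagonal:
\[
M=\begin{pmatrix}
1 & p & 0 & 0 & \cdots & 0 & 0\\
F_{n} & 1 & -qF_{n} & 0 & \cdots & 0 & 0\\
F_{n-1} & F_{n} & 1-F_{n+1} & -qF_{n} & \cdots & 0 & 0\\
F_{n-2} & F_{n-1} & 0 & 1-F_{n+1} & \cdots & 0 & 0\\
\vdots & \vdots & \vdots & \vdots & \ddots & \vdots & \vdots\\
F_{3} & F_{4} & 0 & 0 & \cdots & 1-F_{n+1} & -qF_{n}\\
F_{2} & F_{3} & 0 & 0 & \cdots & 0 & 1-F_{n+1}
\end{pmatrix}.
\]

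Then I would evaluate $\det M$ from its permutation expansion. In columns $3,\dots,n$ every term must select either the diagonal $1-F_{n+1}$ or the super-diagonal $-qF_n$; distinctness of the chosen rows forces a cascade, so the admissible permutations are indexed by a threshold $t\in\{2,\dots,n\}$ for which columns $3,\dots,t$ take $-qF_n$ and columns $t+1,\dots,n$ take $1-F_{n+1}$, leaving rows $1$ and $t$ for the two full columns. For $t\ge 3$ the two ways of filling those columns contribute $F_{n-t+3}$ and $pF_{n-t+2}$ with opposite signs (coming from cascade cycles of length $t-1$ and $t$), and the recurrence collapses them via $F_{n-t+3}-pF_{n-t+2}=qF_{n-t+1}$; after absorbing the signs into $(-qF_n)^{t-2}$ and reindexing $k=n-t+1$, these give exactly the summands $(qF_n)^{n-1-k}(1-F_{n+1})^{k-1}qF_k$ for $k=1,\dots,n-2$, i.e.\ the terms of $(qF_n)^{n-2}\sum_{k=1}^{n-1}(\cdots)$ except the last.

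The main obstacle, and the step I would treat most carefully, is the interaction between the recurrence and the circulant wrap-around: one must verify that the column operations leave precisely those two boundary entries in each column $j\ge 3$ and nothing else, and one must handle the extreme threshold $t=2$ separately. That boundary term equals $(1-F_{n+1})^{n-2}(1-pF_n)$, and rewriting $1-pF_n=(1-F_{n+1})+qF_{n-1}$ shows it supplies exactly the standalone $(1-F_{n+1})^{n-1}$ together with the missing $k=n-1$ summand, completing the identity $(\ref{eqn6})$. The only remaining delicacy is the sign bookkeeping of the cascade cycles, which I would verify term by term.
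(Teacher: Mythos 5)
Your argument is correct, and while it shares the paper's first move --- using the recurrence $F_{p,q,m+1}=pF_{p,q,m}+qF_{p,q,m-1}$ to annihilate all but two entries in each of the lines $3,\dots,n$ of the circulant --- it diverges genuinely in how the determinant is then extracted. The paper conjugates $G_n$ by two explicit matrices: a matrix $A$ that realizes the recurrence as (permuted) row operations, and a second matrix $B$ whose column of geometric ratios $\bigl(qF_{p,q,n}(x)/(1-F_{p,q,n+1}(x))\bigr)^{j}$ collects the two surviving bands into a single column, so that $AG_nB$ is triangular up to a $2\times2$ block and the determinant is read off as $F_{p,q,1}(x)\,\beta_{n,p,q}\,(F_{p,q,1}(x)-F_{p,q,n+1}(x))^{n-2}$. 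You instead stop after the single column operation $\det(G_n)=\det(G_nT)$ and evaluate the sparse matrix $M$ by a Leibniz expansion, observing that the only admissible permutations are the threshold cascades indexed by $t\in\{2,\dots,n\}$; I have checked your sign bookkeeping and it does close up: the cascade cycle of length $t-1$ contributes $(-1)^{t-2}$, which exactly cancels the $(-1)^{t-2}$ from $(-qF_{p,q,n}(x))^{t-2}$, and the $2\times2$ minors collapse via $F_{p,q,n-t+3}(x)-pF_{p,q,n-t+2}(x)=qF_{p,q,n-t+1}(x)$, with the $t=2$ boundary term splitting as $1-pF_{p,q,n}(x)=(1-F_{p,q,n+1}(x))+qF_{p,q,n-1}(x)$ to supply the standalone power and the $k=n-1$ summand. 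What your route buys is that it is division-free: the paper's matrix $B$ has $1-F_{p,q,n+1}(x)$ and (implicitly) $qF_{p,q,n}(x)$ in denominators, so its reduction tacitly assumes these are nonzero and would strictly require a polynomial-identity or continuity argument in the degenerate cases, whereas your permutation expansion proves the identity with no such caveat. What the paper's route buys is a cleaner final picture (an explicitly triangularized matrix) at the cost of having to guess the auxiliary matrix $B$ and verify $\det(A)\det(B)=1$.
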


\begin{proof}
For $n=1$, det$(G_{1})=1$ satisfies the equation $($\ref{eqn6}$).$ Let us
consider the case $n\geq 2.$ Consider the following matrices:
\begin{equation}
A=\left(
\begin{array}{cccccccc}
1 & 0 & 0 & 0 & \cdots & 0 & 0 & 0 \\
-p & 0 & 0 & 0 & \cdots & 0 & 0 & 1 \\
-q & 0 & 0 & 0 & \cdots & 0 & 1 & -p \\
\vdots & \vdots & \vdots &  & \ddots & \vdots & \vdots &  \\
0 & 0 & 0 & 1 & \cdots & 0 & 0 & 0 \\
0 & 0 & 1 & -p & \cdots & 0 & 0 & 0 \\
0 & 1 & -p & -q & \cdots & 0 & 0 & 0%
\end{array}%
\right) _{n\times n}  \label{eqn11}
\end{equation}%
and%
\begin{equation*}
B=\left(
\begin{array}{cccccc}
1 & 0 & 0 & \cdots & 0 & 0 \\
0 & \left( \frac{qF_{p,q,n}\left( x\right) }{1-F_{p,q,n+1}\left( x\right) }%
\right) ^{n-2} & 0 & \cdots & 0 & 1 \\
0 & \left( \frac{qF_{p,q,n}\left( x\right) }{1-F_{p,q,n+1}\left( x\right) }%
\right) ^{n-3} & 0 & \cdots & 1 & 0 \\
\vdots & \vdots & \vdots & \ddots & \vdots &  \\
0 & \left( \frac{qF_{p,q,n}\left( x\right) }{1-F_{p,q,n+1}\left( x\right) }%
\right) & 1 & \cdots & 0 & 0 \\
0 & 1 & 0 & \cdots & 0 & 0%
\end{array}%
\right) _{n\times n}.
\end{equation*}%
Notice that
\begin{equation*}
\det (A)=\det (B)=\left( -1\right) ^{\frac{\left( n-1\right) \left(
n-2\right) }{2}}.
\end{equation*}%
If we multiply the matrices $A,G_{n}$ and $B$ we have the following product
matrices:

\begin{equation*}
{\footnotesize AG_{n}B=%
\begin{pmatrix}
F_{p,q,1}\left( x\right) & \alpha _{n} & F_{p,q,n-1}\left( x\right) & \cdots
& F_{p,q,3}\left( x\right) & F_{p,q,2}\left( x\right) \\
0 & \beta _{n} & F_{p,q,n-2}\left( x\right) & \cdots & qF_{p,q,2}\left(
x\right) & qF_{p,q,1}\left( x\right) \\
0 & 0 & F_{p,q,1}\left( x\right) -F_{p,q,n+1}\left( x\right) & \cdots &  &
\\
0 & 0 & -qF_{p,q,n}\left( x\right) &  &  &  \\
\vdots & \vdots &  & \ddots &  &  \\
0 & 0 &  &  & \ddots &  \\
0 & 0 & 0 &  & -qF_{p,q,n}\left( x\right) & F_{p,q,1}\left( x\right)
-F_{p,q,n+1}\left( x\right)%
\end{pmatrix}%
,}
\end{equation*}%
where
\begin{equation}
\alpha _{n,p,q}=\overset{n-1}{\underset{k=1}{\sum }}\left( \frac{%
qF_{p,q,n}(x)}{F_{p,q,1}(x)-F_{p,q,n+1}(x)}\right) ^{n-\left( k+1\right)
}F_{p,q,k+1}(x)  \label{eqn7}
\end{equation}%
and%
\begin{equation}
\beta _{n,p,q}=\left( 1-F_{p,q,n+1}\left( x\right) \right) +\overset{n-1}{%
\underset{k=1}{\sum }}\left( \frac{qF_{p,q,n}(x)}{F_{p,q,1}(x)-F_{p,q,n+1}(x)%
}\right) ^{n-\left( k+1\right) }qF_{p,q,k}\left( x\right) .  \label{eqn8}
\end{equation}%
Then we have%
\begin{equation*}
\det \left( AG_{n}B\right) =F_{p,q,1}\left( x\right) \beta _{n,p,q}\left(
F_{p,q,1}\left( x\right) -F_{p,q,n+1}\left( x\right) \right) ^{n-2}.
\end{equation*}%
Using the equation $($\ref{eqn8}$),$ we get
\begin{eqnarray*}
\det \left( AG_{n}B\right) &=&\left( 1-F_{p,q,n+1}\left( x\right) \right)
^{n-1} \\
&&+\left( qF_{p,q,n}\left( x\right) \right) ^{n-2}\overset{n-1}{\underset{k=1%
}{\sum }}\left( \frac{1-F_{p,q,n+1}(x)}{qF_{p,q,n}(x)}\right)
^{k-1}qF_{p,q,k}\left( x\right) .
\end{eqnarray*}%
Since $\det \left( AG_{n}B\right) =\det \left( G_{n}\right) ,$ we find%
\begin{eqnarray*}
\det \left( G_{n}\right) &=&\left( 1-F_{p,q,n+1}\left( x\right) \right)
^{n-1} \\
&&+\left( qF_{p,q,n}\left( x\right) \right) ^{n-2}\overset{n-1}{\underset{k=1%
}{\sum }}\left( \frac{1-F_{p,q,n+1}(x)}{qF_{p,q,n}(x)}\right)
^{k-1}qF_{p,q,k}\left( x\right) .
\end{eqnarray*}
\end{proof}

Now we give the following theorem for generalized Lucas polynomials $%
L_{p,q,n}\left( x\right) $.

\begin{theorem}
Let $H_{n}$ be a right circulant matrix of the following form:%
\begin{equation}
H_{n}=\left(
\begin{array}{cccccc}
L_{p,q,1}(x) & L_{p,q,2}(x) & L_{p,q,3}(x) & \cdots & L_{p,q,n-1}(x) &
L_{p,q,n}(x) \\
L_{p,q,n}(x) & L_{p,q,1}(x) & L_{p,q,2}(x) & \cdots & L_{p,q,n-2}(x) &
L_{p,q,n-1}(x) \\
L_{p,q,n-1}(x) & L_{p,q,n}(x) & L_{p,q,1}(x) & \cdots & L_{p,q,n-3}(x) &
L_{p,q,n-2}(x) \\
\vdots & \vdots & \vdots & \ddots & \vdots & \vdots \\
L_{p,q,4}(x) & L_{p,q,5}(x) & L_{p,q,6}(x) & \cdots & L_{p,q,2}(x) &
L_{p,q,3}(x) \\
L_{p,q,3}(x) & L_{p,q,4}(x) & L_{p,q,5}(x) & \cdots & L_{p,q,1}(x) &
L_{p,q,2}(x) \\
L_{p,q,2}(x) & L_{p,q,3}(x) & L_{p,q,4}(x) & \cdots & L_{p,q,n}(x) &
L_{p,q,1}(x)%
\end{array}%
\right) .  \label{eqn13}
\end{equation}%
Then we have
\begin{equation}
\begin{array}{l}
\det (H_{n})=L_{p,q,1}\left( x\right) \left( L_{p,q,1}\left( x\right)
-L_{p,q,n+1}(x)\right) ^{n-1} \\
+L_{p,q,1}\left( x\right) q^{n-1}\left( L_{p,q,n}(x)-2\right) ^{n-2}\overset{%
n-1}{\underset{k=1}{\sum }}\left( \frac{L_{p,q,1}\left( x\right)
-L_{p,q,n+1}(x)}{qL_{p,q,n}(x)-2q}\right) ^{k-1}L_{p,q,k}(x) \\
-2q^{n-1}\left( L_{p,q,n}(x)-2\right) ^{n-2}\overset{n-1}{\underset{k=1}{%
\sum }}\left( \frac{L_{p,q,1}\left( x\right) -L_{p,q,n+1}(x)}{%
qL_{p,q,n}(x)-2q}\right) ^{k-1}L_{p,q,k+1}(x).%
\end{array}
\label{eqn9}
\end{equation}
\end{theorem}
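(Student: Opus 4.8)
The plan is to imitate the proof of Theorem~\ref{thm3} line by line, the key point being that the generalized Lucas polynomials satisfy the \emph{same} recurrence $L_{p,q,k+1}(x)=pL_{p,q,k}(x)+qL_{p,q,k-1}(x)$ as the generalized Fibonacci polynomials; only the initial values change, from $F_{p,q,0}(x)=0,\ F_{p,q,1}(x)=1$ to $L_{p,q,0}(x)=2,\ L_{p,q,1}(x)=p$. First I would settle the trivial case $n=1$, where $H_{1}=\bigl(L_{p,q,1}(x)\bigr)$ and $\det(H_{1})=p=L_{p,q,1}(x)$ agrees with (\ref{eqn9}) under the convention that empty sums vanish. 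Assume henceforth $n\ge 2$.

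For $n\ge2$ I would reuse verbatim the left multiplier $A$ of (\ref{eqn11}): its entries are assembled only from the recurrence coefficients $1,-p,-q$, so $AH_{n}$ produces exactly the same cancellations, now driven by $L_{p,q,k+1}(x)-pL_{p,q,k}(x)-qL_{p,q,k-1}(x)=0$, and $\det(A)=(-1)^{(n-1)(n-2)/2}$ is unaffected. On the right I would take the matrix $B'$ obtained from $B$ by replacing the Fibonacci ratio $qF_{p,q,n}(x)/(1-F_{p,q,n+1}(x))$ with its Lucas analogue $\dfrac{qL_{p,q,n}(x)-2q}{L_{p,q,1}(x)-L_{p,q,n+1}(x)}=\dfrac{q\bigl(L_{p,q,n}(x)-L_{p,q,0}(x)\bigr)}{L_{p,q,1}(x)-L_{p,q,n+1}(x)}$; the reduction intrinsically generates quantities of the shape $X_{1}-X_{n+1}$ and $q(X_{n}-X_{0})$, so feeding in $X=L$ reproduces these denominators automatically, and $\det(B')=(-1)^{(n-1)(n-2)/2}$. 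The product $AH_{n}B'$ should come out block upper triangular, with a trailing $(n-2)\times(n-2)$ bidiagonal block of constant diagonal $L_{p,q,1}(x)-L_{p,q,n+1}(x)$ and a leading $2\times2$ corner $C$ whose $(1,1)$ entry is $L_{p,q,1}(x)$. Because $\det(A)\det(B')=1$ we then have $\det(H_{n})=\det(AH_{n}B')=\det(C)\bigl(L_{p,q,1}(x)-L_{p,q,n+1}(x)\bigr)^{n-2}$.

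The sole genuine obstacle, and the reason (\ref{eqn9}) carries two sums and a factor $-2$, is the boundary term created by $L_{p,q,0}(x)=2$. In the Fibonacci reduction the value $F_{p,q,0}(x)=0$ kills the entry that wraps around the circulant, so the corner is \emph{triangular} there (its $(2,1)$ entry is $0$) and one is left with the single sum $\beta_{n,p,q}$ of (\ref{eqn8}). For $H_{n}$ the analogous wrap-around leaves a residual $qL_{p,q,0}(x)=2q$ in the $(2,1)$ slot, so I expect $C=\begin{pmatrix}L_{p,q,1}(x) & \alpha_{n}\\ 2q & \gamma_{n}\end{pmatrix}$ with $\gamma_{n}=\bigl(L_{p,q,1}(x)-L_{p,q,n+1}(x)\bigr)+\sum_{k=1}^{n-1}\bigl(\tfrac{qL_{p,q,n}(x)-2q}{L_{p,q,1}(x)-L_{p,q,n+1}(x)}\bigr)^{n-(k+1)}qL_{p,q,k}(x)$ and $\alpha_{n}=\sum_{k=1}^{n-1}\bigl(\tfrac{qL_{p,q,n}(x)-2q}{L_{p,q,1}(x)-L_{p,q,n+1}(x)}\bigr)^{n-(k+1)}L_{p,q,k+1}(x)$, exactly paralleling (\ref{eqn7})--(\ref{eqn8}). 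Then $\det(C)=L_{p,q,1}(x)\gamma_{n}-2q\,\alpha_{n}$, and multiplying by $\bigl(L_{p,q,1}(x)-L_{p,q,n+1}(x)\bigr)^{n-2}$ and clearing the powers of the ratio converts the $\gamma_{n}$-part into the first term plus the $L_{p,q,k}(x)$-sum of (\ref{eqn9}), and the $2q\,\alpha_{n}$-part into the $-2$-weighted $L_{p,q,k+1}(x)$-sum. The delicate verification is precisely that the only surviving $L_{p,q,0}(x)$-residual sits in the $(2,1)$ entry and equals $2q$, and that every entry below the corner in the first two columns still vanishes, so that the trailing block stays exactly bidiagonal; granting this, substituting $L_{p,q,1}(x)=p$ and simplifying yields (\ref{eqn9}).
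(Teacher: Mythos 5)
Your proposal is correct and follows exactly the route the paper takes: the paper reuses the same left multiplier $A$ from (\ref{eqn11}) and a right multiplier $D$ identical to your $B'$ (with the ratio $\frac{qL_{p,q,n}(x)-2q}{L_{p,q,1}(x)-L_{p,q,n+1}(x)}$), and then simply asserts that ``similar arguments'' to Theorem \ref{thm3} finish the job. In fact your write-up supplies the one detail the paper leaves implicit --- that the wrap-around residue $qL_{p,q,0}(x)=2q$ survives in the $(2,1)$ corner entry, so the determinant becomes $\bigl(L_{p,q,1}(x)\gamma_{n}-2q\,\alpha_{n}\bigr)\bigl(L_{p,q,1}(x)-L_{p,q,n+1}(x)\bigr)^{n-2}$, which is precisely where the second sum and the factor $-2$ in (\ref{eqn9}) come from.
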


\begin{proof}
For $n=1$, det$(H_{1})=p$ satisfies the equation $($\ref{eqn9}$).$ Let us
consider the case $n\geq 2.$ Let $A$ be a matrix of the form given in $($\ref%
{eqn11}$)$ and D be a matrix of the following form:%
\begin{equation*}
D=\left(
\begin{array}{cccccc}
1 & 0 & 0 & \cdots & 0 & 0 \\
0 & \left( \frac{qL_{p,q,n}\left( x\right) -2q}{L_{p,q,1}\left( x\right)
-L_{p,q,n+1}\left( x\right) }\right) ^{n-2} & 0 & \cdots & 0 & 1 \\
0 & \left( \frac{qL_{p,q,n}\left( x\right) -2q}{L_{p,q,1}\left( x\right)
-L_{p,q,n+1}\left( x\right) }\right) ^{n-3} & 0 & \cdots & 1 & 0 \\
\vdots & \vdots & \vdots & \ddots & \vdots &  \\
0 & \left( \frac{qL_{p,q,n}\left( x\right) -2q}{L_{p,q,1}\left( x\right)
-L_{p,q,n+1}\left( x\right) }\right) & 1 & \cdots & 0 & 0 \\
0 & 1 & 0 & \cdots & 0 & 0%
\end{array}%
\right) _{n\times n}.
\end{equation*}%
Using the properties of determinants and multiplying these matrices $A,H_{n}$
and $D$ the proof can be completed by similar arguments used in the proof of
the above theorem.
\end{proof}

\section{An Application to Coding Theory}

\label{sec:4}

In this section, we give two new coding/decoding methods using the right
circulant matrices $G_{n}$ and $H_{n}$ for $p=q=1$. At first, we give an
algorithm by means of the generalized Fibonacci polynomials. Following the
notations in \cite{tas}, we give generalized Fibonacci and Lucas blocking
algorithms with transformations%
\begin{equation*}
M\times G_{n}=E\text{, }M\times H_{n}=E
\end{equation*}%
and%
\begin{equation*}
E\times \left( G_{n}\right) ^{-1}=M\text{, }E\times \left( H_{n}\right)
^{-1}=M\text{,}
\end{equation*}%
where $M$ is nonsingular square message matrix, $E$ is a code matrix, $G_{n}$
is coding matrix and the inverse matrix $\left( G_{n}\right) ^{-1}$ is
decoding matrix.

We put our message in a matrix adding zero between two words and end of the
message until we obtain the size of the message matrix is $3m$. Dividing the
message square matrix $M$ into the block matrices, named $B_{i}$ ($1\leq
i\leq m^{2}$), of size $3\times 3$, from left to right, we can construct a
new coding method.

Now we explain the symbols of our coding method. Suppose that matrices $%
B_{i} $ and $E_{i}$ are of the following forms:%
\begin{equation*}
B_{i}=\left[
\begin{array}{ccc}
b_{1}^{i} & b_{2}^{i} & b_{3}^{i} \\
b_{4}^{i} & b_{5}^{i} & b_{6}^{i} \\
b_{7}^{i} & b_{8}^{i} & b_{9}^{i}%
\end{array}%
\right] \text{and }E_{i}=\left[
\begin{array}{ccc}
e_{1}^{i} & e_{2}^{i} & e_{3}^{i} \\
e_{4}^{i} & e_{5}^{i} & e_{6}^{i} \\
e_{7}^{i} & e_{8}^{i} & e_{9}^{i}%
\end{array}%
\right] \text{.}
\end{equation*}%
We use the matrix $G_{n}$ given in $($\ref{eqn12}$)$ for $p=q=1$ and rewrite
the elements of this matrix as $G_{n}=\left[
\begin{array}{ccc}
g_{1} & g_{2} & g_{3} \\
g_{3} & g_{1} & g_{2} \\
g_{2} & g_{3} & g_{1}%
\end{array}%
\right] .$ The number of the block matrices $B_{i}$ is denoted by $b$. In
accordance with $b$, we choose the number $n$ as follows:%
\begin{equation*}
n=\left\{
\begin{array}{ccc}
3 & \text{,} & b=1 \\
3b & \text{,} & b\neq 1%
\end{array}%
\right. .
\end{equation*}%
Using the chosen $n$, we write the following character table according to $%
mod27$ (this table can be enlarged according to the used characters in the
message matrix). We begin the \textquotedblleft $n$\textquotedblright\ for
the first character.%
\begin{equation}
\begin{tabular}{|c|c|c|c|c|c|c|c|c|}
\hline
A & B & C & D & E & F & G & H & I \\ \hline
$n$ & $n+1$ & $n+2$ & $n+3$ & $n+4$ & $n+5$ & $n+6$ & $n+7$ & $n+8$ \\ \hline
J & K & L & M & N & O & P & Q & R \\ \hline
$n+9$ & $n+10$ & $n+11$ & $n+12$ & $n+13$ & $n+14$ & $n+15$ & $n+16$ & $n+17$
\\ \hline
S & T & U & V & W & X & Y & Z & 0 \\ \hline
$n+18$ & $n+19$ & $n+20$ & $n+21$ & $n+22$ & $n+23$ & $n+24$ & $n+25$ & $%
n+26 $ \\ \hline
\end{tabular}
\label{eqn14}
\end{equation}

\textbf{Generalized Fibonacci Blocking Algorithm}

\textbf{Coding Algorithm}

\textbf{Step 1.} Divide the matrix $M$ into blocks $B_{i}$ $\left( 1\leq
i\leq m^{2}\right) $.

\textbf{Step 2.} Choose $n$.

\textbf{Step 3. }Determine $b_{j}^{i}$ $\left( 1\leq j\leq 9\right) $.

\textbf{Step 4.} Compute $\det (B_{i})\rightarrow d_{i}$.

\textbf{Step 5.} Construct $K=\left[ d_{i},b_{k}^{i}\right] _{k\in
\{1,2,3,4,6,7,8,9\}}$.

\textbf{Step 6. }End of algorithm.

\textbf{Decoding Algorithm }

\textbf{Step 1.} Compute $G_{n}$.

\textbf{Step 2.} Determine $g_{j}$ $(1\leq j\leq 3)$.

\textbf{Step 3. }Compute $g_{1}b_{1}^{i}+g_{3}b_{2}^{i}+g_{2}b_{3}^{i}%
\rightarrow e_{1}^{i},$ $\left( 1\leq i\leq m^{2}\right) $.

\qquad \qquad \qquad \qquad $g_{2}b_{1}^{i}+g_{1}b_{2}^{i}+g_{3}b_{3}^{i}%
\rightarrow e_{2}^{i},$

\qquad \qquad \qquad \qquad $g_{3}b_{1}^{i}+g_{2}b_{2}^{i}+g_{1}b_{3}^{i}%
\rightarrow e_{3}^{i},$

\qquad \qquad \qquad \qquad $g_{1}b_{7}^{i}+g_{3}b_{8}^{i}+g_{2}b_{9}^{i}%
\rightarrow e_{7}^{i},$

\qquad \qquad \qquad \qquad $g_{2}b_{7}^{i}+g_{1}b_{8}^{i}+g_{3}b_{9}^{i}%
\rightarrow e_{8}^{i},$

\qquad \qquad \qquad \qquad $g_{3}b_{7}^{i}+g_{2}b_{8}^{i}+g_{1}b_{9}^{i}%
\rightarrow e_{9}^{i}.$

\textbf{Step 4.} Solve%
\begin{eqnarray*}
\det \left( G_{3}\right) \times d_{i}
&=&e_{1}^{i}e_{9}^{i}(g_{2}b_{4}^{i}+g_{1}x_{i}+g_{3}b_{6}^{i})+e_{8}^{i}e_{3}^{i}(g_{1}b_{4}^{i}+g_{3}x_{i}+g_{2}b_{6}^{i})
\\
&&+e_{7}^{i}e_{2}^{i}(g_{3}b_{4}^{i}+g_{2}x_{i}+g_{1}b_{6}^{i})-(e_{3}^{i}e_{7}^{i}(g_{2}b_{4}^{i}+g_{1}x_{i}+g_{3}b_{6}^{i})
\\
&&+e_{8}^{i}e_{1}^{i}(g_{3}b_{4}^{i}+g_{2}x_{i}+g_{1}b_{6}^{i})+e_{9}^{i}e_{2}^{i}(g_{1}b_{4}^{i}+g_{3}x_{i}+g_{2}b_{6}^{i}).
\end{eqnarray*}

\textbf{Step 5. }Substitute for $x_{i}=b_{5}^{i}$.

\textbf{Step 6.} Construct $B_{i}$.

\textbf{Step 7.} Construct $M$.

\textbf{Step 8.} End of algorithm.

We give an application of the above generalized Fibonacci blocking algorithm
in the following example for $b=1$.

\begin{example}
\label{exm1}Let us consider the message matrix for the following message text%
$:$%
\begin{equation*}
\text{\textquotedblleft SUMEYRA\textquotedblright }
\end{equation*}%
Using the message text, we get the following message matrix $M:$%
\begin{equation*}
M=\left[
\begin{array}{ccc}
S & U & M \\
E & Y & R \\
A & 0 & 0%
\end{array}%
\right] _{3\times 3}.
\end{equation*}%
\textbf{Coding Algorithm:}

\textbf{Step 1. }We construct the message text $M$ of size $3\times 3$,
named $B_{1}:$%
\begin{equation*}
B_{1}=\left[
\begin{array}{ccc}
S & U & M \\
E & Y & R \\
A & 0 & 0%
\end{array}%
\right] \text{.}
\end{equation*}%
\textbf{Step 2.} Since $b=1$, we calculate $n=3$. For $n=3$, we use the
following \textquotedblleft letter table\textquotedblright\ for the message
matrix $M:$%
\begin{equation*}
\begin{tabular}{|l|l|l|l|l|l|l|l|}
\hline
$S$ & $U$ & $M$ & $E$ & $Y$ & $R$ & $A$ & $0$ \\ \hline
$21$ & $23$ & $15$ & $7$ & $27$ & $20$ & $3$ & $2$ \\ \hline
\end{tabular}%
.
\end{equation*}%
\textbf{Step 3.} We have the elements of the block $B_{1}$ as follows:%
\begin{equation*}
\begin{tabular}{|l|l|l|}
\hline
$b_{1}^{1}=21$ & $b_{2}^{1}=23$ & $b_{3}^{1}=15$ \\ \hline
$b_{4}^{1}=7$ & $b_{5}^{1}=27$ & $b_{6}^{1}=20$ \\ \hline
$b_{7}^{1}=3$ & $b_{8}^{1}=2$ & $b_{9}^{1}=2$ \\ \hline
\end{tabular}%
.
\end{equation*}%
\textbf{Step 4.} Now we calculate the determinant $d_{1}$ of the block $%
B_{1}:$%
\begin{equation*}
\begin{tabular}{|l|}
\hline
$d_{1}=\det (B_{1})=347$ \\ \hline
\end{tabular}%
.
\end{equation*}%
\textbf{Step 5.} Using Step 3 and Step 4, we obtain the following matrix $K:$%
\begin{equation*}
K=\left[
\begin{array}{ccccccccc}
347 & 21 & 23 & 15 & 7 & 20 & 3 & 2 & 2%
\end{array}%
\right] .
\end{equation*}%
\textbf{Step 6.} End of algorithm.\newline
\textbf{Decoding algorithm:}

\textbf{Step 1.} By $($\ref{eqn12}$),$ we know that%
\begin{equation*}
G_{3}=\left[
\begin{array}{ccc}
1 & 1 & 2 \\
2 & 1 & 1 \\
1 & 2 & 1%
\end{array}%
\right] \text{.}
\end{equation*}%
\textbf{Step 2. }The elements of $G_{3}$ are denoted by%
\begin{equation*}
g_{1}=1\text{, }g_{2}=1\text{ and }g_{3}=2\text{.}
\end{equation*}%
\textbf{Step 3.} We compute the elements $%
e_{1}^{1},e_{2}^{1},e_{3}^{1},e_{4}^{1},e_{7}^{1},e_{8}^{1},e_{9}^{1}$ to
construct the matrix $E_{1}:$%
\begin{equation*}
e_{1}^{1}=82\text{, }e_{2}^{1}=74\text{, }e_{3}^{1}=80\text{, }e_{7}^{1}=9%
\text{, }e_{8}^{1}=9\text{ and }e_{9}^{1}=10\text{.}
\end{equation*}%
\textbf{Step 4.} We calculate the elements $x_{1}:$%
\begin{eqnarray*}
4\times 347 &=&80624+2926x_{1}-78912-2938x_{1} \\
&\Rightarrow &x_{1}=27\text{.}
\end{eqnarray*}%
\textbf{Step 5.} We rename $x_{1}$ as follows$:$%
\begin{equation*}
x_{1}=b_{5}^{1}=27\text{.}
\end{equation*}%
\textbf{Step 6. }We construct the block matrix $B_{1}:$%
\begin{equation*}
B_{1}=\left[
\begin{array}{ccc}
21 & 23 & 15 \\
7 & 27 & 20 \\
3 & 2 & 2%
\end{array}%
\right] .
\end{equation*}%
\textbf{Step 7.} We obtain the message matrix $M:$%
\begin{equation*}
M=\left[
\begin{array}{ccc}
21 & 23 & 15 \\
7 & 27 & 20 \\
3 & 2 & 2%
\end{array}%
\right] =\left[
\begin{array}{ccc}
S & U & M \\
E & Y & R \\
A & 0 & 0%
\end{array}%
\right] .
\end{equation*}%
\textbf{Step 8.} End of algorithm.
\end{example}

Now, we give another blocking algorithm by means of the generalized Lucas
polynomials $L_{p,q,n}\left( x\right) $. Let's suppose
\begin{equation*}
B_{i}=\left[
\begin{array}{cc}
b_{1}^{i} & b_{2}^{i} \\
b_{3}^{i} & b_{4}^{i}%
\end{array}%
\right] \text{ and }E_{i}=\left[
\begin{array}{cc}
e_{1}^{i} & e_{2}^{i} \\
e_{3}^{i} & e_{4}^{i}%
\end{array}%
\right] .
\end{equation*}%
We use the matrix $H_{n}$ given in $($\ref{eqn13}$)$ for $p=q=1$ and we
rewrite the elements of this matrix as $H_{n}=\left[
\begin{array}{cc}
h_{1} & h_{2} \\
h_{2} & h_{1}%
\end{array}%
\right] .$ Similarly, the number of the block matrices $B_{i}$ is denoted by
$b$. According to $b$, we choose the number $n$ as follows:
\begin{equation*}
n=\left\{
\begin{array}{ccc}
2 & \text{,} & b=1 \\
2b & \text{,} & b\neq 1%
\end{array}%
\right. .
\end{equation*}%
Using the chosen $n$, we write the character table given in $($\ref{eqn14}$)$
according to $mod27$ or we can differently array this table. For example, we
begin the $"n"$ for the first, second,central, last character etc.

\textbf{Generalized Lucas Blocking Algorithm \ \ }

\textbf{Coding Algorithm \ \ \ \ \ \ \ \ \ }

\textbf{Step 1.} Divide the matrix $M$ into blocks $B_{i}$ $\left( 1\leq
i\leq m^{2}\right) $.

\textbf{Step 2.} Choose $n$.

\textbf{Step 3. }Determine $b_{j}^{i}$ $\left( 1\leq j\leq 4\right) $.

\textbf{Step 4.} Compute $\det (B_{i})\rightarrow d_{i}$.

\textbf{Step 5.} Construct $K=\left[ d_{i},b_{k}^{i}\right] _{k\in
\{1,3,4\}} $.

\textbf{Step 6. }End of algorithm.

\textbf{Decoding Algorithm }

\textbf{Step 1.} Compute $H_{n}$.

\textbf{Step 2.} Determine $h_{j}$ $(1\leq j\leq 2)$.

\textbf{Step 3. }Compute $h_{1}b_{3}^{i}+h_{2}b_{4}^{i}\rightarrow
e_{3}^{i}, $ $\left( 1\leq i\leq m^{2}\right) $.

\qquad \qquad \qquad \qquad $h_{2}b_{3}^{i}+h_{1}b_{4}^{i}\rightarrow
e_{4}^{i}.$

\textbf{Step 4.} Solve $\det \left( H_{2}\right) \times
d_{i}=e_{4}^{i}(h_{1}b_{1}^{i}+h_{2}x_{i})-e_{3}^{i}(h_{2}b_{1}^{i}+h_{1}x_{i})
$.

\textbf{Step 5. }Substitute for $x_{i}=b_{2}^{i}$.

\textbf{Step 6.} Construct $B_{i}$.

\textbf{Step 7.} Construct $M$.

\textbf{Step 8.} End of algorithm.

We give following example as an application of the generalized Lucas
blocking algorithm for $b=1$.

\begin{example}
\label{exm2}Let us consider the message matrix for the following message
text:%
\begin{equation*}
\text{\textquotedblleft GOOD\textquotedblright }
\end{equation*}%
Using the message text, we get the following message matrix $M:$%
\begin{equation*}
M=\left[
\begin{array}{cc}
G & O \\
O & D%
\end{array}%
\right] _{2\times 2}.
\end{equation*}%
\textbf{Coding Algorithm:}

\textbf{Step 1. }We construct the message text $M$ of size $2\times 2$,
named $B_{1}:$%
\begin{equation*}
B_{1}=\left[
\begin{array}{cc}
G & O \\
O & D%
\end{array}%
\right] \text{.}
\end{equation*}%
\textbf{Step 2.} Since $b=1$, we calculate $n=2$. For $n=2$, we use the
following \textquotedblleft letter table\textquotedblright\ for the message
matrix $M:$%
\begin{equation*}
\begin{tabular}{|l|l|l|l|}
\hline
$G$ & $O$ & $O$ & $D$ \\ \hline
$8$ & $16$ & $16$ & $5$ \\ \hline
\end{tabular}%
.
\end{equation*}%
\textbf{Step 3.} We have the elements of the block $B_{1}$ as follows:%
\begin{equation*}
\begin{tabular}{|l|l|l|l|}
\hline
$b_{1}^{1}=8$ & $b_{2}^{1}=16$ & $b_{3}^{1}=16$ & $b_{4}^{1}=5$ \\ \hline
\end{tabular}%
.
\end{equation*}%
\textbf{Step 4.} Now we calculate the determinants $d_{1}$ of the block $%
B_{1}:$%
\begin{equation*}
\begin{tabular}{|l|}
\hline
$d_{1}=\det (B_{1})=-216$ \\ \hline
\end{tabular}%
.
\end{equation*}%
\textbf{Step 5.} Using Step 3 and Step 4 we obtain the following matrix $K:$%
\begin{equation*}
K=\left[
\begin{array}{cccc}
-216 & 8 & 16 & 5%
\end{array}%
\right] .
\end{equation*}%
\textbf{Step 6.} End of algorithm.\newline
\textbf{Decoding algorithm:}

\textbf{Step 1.} By $($\ref{eqn13}$),$ we know that%
\begin{equation*}
H_{2}=\left[
\begin{array}{cc}
1 & 3 \\
3 & 1%
\end{array}%
\right] \text{.}
\end{equation*}%
\textbf{Step 2. }The elements of $H_{2}$ are denoted by%
\begin{equation*}
h_{1}=1\text{ and }h_{2}=3\text{.}
\end{equation*}%
\textbf{Step 3.} We compute the elements $e_{3}^{1},e_{4}^{1}$ to construct
the matrix $E_{1}:$%
\begin{equation*}
e_{3}^{1}=31\text{, }e_{4}^{1}=53\text{.}
\end{equation*}%
\textbf{Step 4.} We calculate the elements $x_{1}:$%
\begin{eqnarray*}
\left( -8\right) \times \left( -216\right) &=&424+159x_{1}-744-31x_{1} \\
&\Rightarrow &x_{1}=16\text{.}
\end{eqnarray*}%
\textbf{Step 5.} We rename $x_{1}$ as follows$:$%
\begin{equation*}
x_{1}=b_{2}^{1}=16\text{.}
\end{equation*}%
\textbf{Step 6. }We construct the block matrix $B_{1}:$%
\begin{equation*}
B_{1}=\left[
\begin{array}{cc}
8 & 16 \\
16 & 5%
\end{array}%
\right] .
\end{equation*}%
\textbf{Step 7.} We obtain the message matrix $M:$%
\begin{equation*}
M=\left[
\begin{array}{cc}
8 & 16 \\
16 & 5%
\end{array}%
\right] =\left[
\begin{array}{cc}
G & O \\
O & D%
\end{array}%
\right] .
\end{equation*}%
\textbf{Step 8.} End of algorithm.
\end{example}

\section{Conclusions}

We have presented two new coding/decoding algorithms by means of the blocks
of sizes $3\times 3$ and $2\times 2$. Since the determinant of the matrix $%
G_{2}$ is $0$, we study the matrix $G_{n}$ for $n\geq 3$ in the generalized
Fibonacci blocking algorithm, although we can study with the matrix $H_{n}$
for $n\geq 2$ in the generalized Lucas blocking algorithm.

By differently taking $p$ and $q$, we can obtain different algorithms.
Furthermore it can be mixed the above new blocking methods with the previous
methods given in \cite{tas}, \cite{tas2} and \cite{ucar2}. It is possible to
produce new blocking methods similar to minesweeper algorithm given in \cite%
{ucar2}.

\end{document}